\newtheorem{thm}{Theorem}[section]
\newtheorem{theorem}[thm]{Theorem}
\newtheorem{corollary}[thm]{Corollary}
\newtheorem{lemma}[thm]{Lemma}
\newtheorem{proposition}[thm]{Proposition}
\theoremstyle{definition}
\newtheorem{definition}[thm]{Definition}
\theoremstyle{remark}
\newtheorem{example}[thm]{Example}
\newtheorem{remark}[thm]{Remark}
\newcommand{\RR}{\mathbb R}
\newcommand{\NN}{\mathbb N}
\newcommand{\CC}{\mathbb C}
\begin{document}
\title{Necessary and sufficient conditions to perform Spectral Tetris}
\author[P.G. Casazza, A. Heinecke, K. Kornelson, Y. Wang, Z. Zhou ]{Peter G. Casazza, Andreas Heinecke, Keri Kornelson, Yang Wang, Zhengfang Zhou}
\address[P.~G.~Casazza]{Department of Mathematics, University of Missouri, Columbia, Missouri 65211, USA; E-mail: casazzap@missouri.edu}
\address[A.~Heinecke]{Department of Mathematics, University of Missouri, Columbia, Missouri 65211, USA; E-mail: ah343@mail.missouri.edu}
\address[K.~Kornelson]{Department of Mathematics, University of Oklahoma, Norman, Oklahoma 73019, USA; E-mail: kkornelson@ou.edu}
\address[Y.~Wang]{Department of Mathematics, Michigan State University, East Lansing, Michigan 48824, USA; E-mail: ywang@math.msu.edu}
\address[Z.~Zhou]{Department of Mathematics, Michigan State University, East Lansing, Michigan 48824, USA; E-mail: zfzhou@math.msu.edu}

\thanks{Casazza and Heinecke were supported by NSF DMS 1008183,
DTRA/NSF 1042701, AFOSR FA9550-11-1-0245}

\begin{abstract}
{\it Spectral Tetris} has proved to be a powerful tool for constructing \textit{sparse}
equal norm Hilbert space frames.  We introduce a new form of Spectral Tetris which
works for non-equal norm frames.   
It is known that this method cannot construct all frames --- even in the new case introduced
here.
Until now, it has been a mystery as to why Spectral Tetris sometimes works and sometimes
fails.  We will give a complete answer to this mystery by giving necessary and sufficient
conditions for Spectral Tetris to construct frames in all cases including
  equal norm frames, prescribed
norm frames,  frames with constant spectrum of the frame operator, and frames with
prescribed spectrum for the frame operator.  We present a variety of examples as well as
special cases where Spectral Tetris always works.  

\end{abstract}

\keywords{frames, tight frames, sparsity, sparse matrices, majorization, synthesis operator, redundancy}
\subjclass[2000]{15B99, 42C15}
\maketitle

\section{Introduction}

Hilbert space frame theory has distinguished itself for its broad application to problems in
pure mathematics, applied mathematics, computer science, engineering and much more.
Until recently, there was a serious lack of algorithms for constructing frames
with given properties, leaving the field to rely on {\it existence proofs} for the existence of
fundamental frames for applications.  So it was a major advance when in
\cite{CFMWZ09} a simple algorithm, designated {\it Spectral Tetris}, for constructing unit norm tight frames was introduced.
Since the appearence of \cite{CFMWZ09} there have been numerous papers written on variations
of Spectral Tetris. In all cases we discover that Spectral Tetris cannot construct all the frames
of a given class.  Until now, it has been a mystery when we can or cannot use Spectral Tetris
to construct families of frames.  In this paper, we introduce another version of Spectral Tetris
which for the first time can be used to construct frames with different norms for the frame
vectors and different eigenvalues for the frame operator.  
We give necessary and sufficient conditions for this version
of Spectral Tetris to work to produce the desired frames.  After this, we give necessary
and sufficient conditions for the original form of Spectral Tetris to produce the desired frames.
The main advantage here is that Spectral Tetris and
its variations give an elementary and easily implementable algorithm for constructing frames.
Although our result is reported as a theorem, it is really an algorithm for constructing certain
classes of frames via Spectral Tetris ideas.

After reviewing known results about the existence of finite frames with given properties and in particular the Spectral Tetris algorithm in Section \ref{Sec:background}, our main theorem in Section \ref{Sec:mainthm} presents a version of Spectral Tetris for the construction of sparse frames with specified frame operator and norms of the vectors. We give necessary and sufficient conditions on the prescribed sequence of eigenvalues and norms for this construction to work. We study special cases of this theorem in the remaining sections; for example, in Section \ref{less2} we classify the cases in which the original Spectral Tetris construction works to construct unit norm tight frames of redundancy less than $2$.

\section{Background}\label{Sec:background}

Let $(e_n)_{n=1}^N$ be the standard unit vector basis of $\CC^N$.
The {\it synthesis operator} of a finite sequence
$(f_m)_{m=1}^M\subseteq\CC^N$ is $F\colon\CC^M\to\CC^N$ given by
\[
Fg=\sum_{m=1}^M\langle g,e_m\rangle f_m,
\]
i.e. $F$ is the $N\times M$ matrix whose $m$-th column is the vector $f_m$. The sequence $(f_m)_{m=1}^M$ is a {\it frame} if its {\it frame operator} $S=FF^*$ satisfies $AI\leq S\leq BI$ for some positive constants $A,B$, where $I$ is the identity on $\CC^N$. In particular, the spectrum of $S$ is positive  and real.  We call $A,B$ the \textit{frame bounds} for $(f_m)_{m=1}^M$. 
The sequence is a {\it tight frame} if $A=B$, i.e. if 
\begin{equation}\label{eqn:tightframe}
Af=\sum_{m=1}^M\langle f,f_m\rangle f_m
\end{equation}
for all $f\in\mathbb{C}^N$, or equivalently if
\[
\sum_{m=1}^M\langle f_m,e_n\rangle \overline{\langle f_m,e_{n'}\rangle}=
\begin{cases}
A, & n=n',\\
0, & n\neq n'.
\end{cases}
\]
If the vectors $(f_m)_{m=1}^M$ are unit vectors which form a tight frame, the tight frame bound $A$ equals $M/N$ and is also called the {\it redundancy} of the frame. The synthesis matrix $F$ of the frame $(f_m)_{m=1}^M$ is called \emph{$s$-sparse}, if it has 
$s$ nonzero entries.

We say a frame $(f_m)_{m=1}^M$ has vectors of norms $(a_m)_{m=1}^M$ if $\|f_m\|=a_m$ for all $m=1,\ldots,M$, and call it a \emph{unit norm frame} if $\|f_m\|=1$ for all $m=1,\ldots,M$.
Unit norm tight frames provide Parseval-like decompositions in terms of nonorthogonal vectors of unit norm; Equation \eqref{eqn:tightframe} becomes \[ f=\frac{N}{M} \sum_{m=1}^M\langle f,f_m\rangle f_m, \] where each summand is a rank-one projection of $f$ onto the span of the corresponding frame vector $f_m$.

We say that a frame has a certain spectrum or certain eigenvalues if its frame operator $S$ has this spectrum or respectively these eigenvalues.  For any frame, the sum of its eigenvalues, counting multiplicities, equals the sum of the squares of the norms of its vectors: \begin{equation}\label{eqn:trace} \sum_{n=1}^N \lambda_n = \sum_{m=1}^M \|f_m\|^2. \end{equation}
This quantity will be exactly the number of vectors $M$ when we work with unit norm frames. 

Two given sequences $(a_m)_{m=1}^M$ and $(\lambda_n)_{n=1}^N$ of positive real numbers have to meet certain requirements in order for an $M$-element frame in $\CC^N$ with norms $(a_m)_{m=1}^M$ and eigenvalues  $(\lambda_n)_{n=1}^N$ to exist. Several characterizations on the sequence of norms for a tight frame to exist, are given in~\cite{CFKLT}, 
while~\cite{CL10} and ~\cite{KL04} show that an $M$-element frame in $\CC^N$ with lengths $(a_m)_{m=1}^M$ and eigenvalues  $(\lambda_n)_{n=1}^N$ exists if and only if the sequence of eigenvalues $(\lambda_n)_{n=1}^N$ \emph{majorizes} the sequence 
$(a_m^2)_{m=1}^M$. This is to say, if and only if, after arranging both sequences in decreasing order, we have
\[
\sum_{i=1}^na_i^2\leq\sum_{i=1}^n\lambda_i 
\]
for every $n=1,\ldots,N$ and then, by~\eqref{eqn:trace}, 
\[
\sum_{i=1}^Ma_i^2 = \sum_{i=1}^N\lambda_i .
\]
Building on the theory of majorization and the Schur-Horn Theorem,~\cite{CFMPS11} 
shows how to explicitly construct every possible frame whose frame operator has a given spectrum and whose vectors are of given prescribed norms. 

The results above do not address the sparsity of the frames, nor the computational complexity of the constructions. In this paper, we determine necessary and sufficient conditions on the sequences of norms and eigenvalues such that an elementary and easily implementable construction can be used to produce a highly sparse frame having the prescribed parameters.

Our algorithm is a generalization of the \emph{Spectral Tetris} algorithm, 
which was the first systematic method for constructing unit norm tight frames. It was introduced in \cite{CFMWZ09} to generate unit norm tight frames in $\CC^N$ for any dimension $N$ and any number of frame vectors $M\geq 2N$.  The algorithm was generalized in \cite{CCHKP10} to construct unit norm frames for $\mathbb{C}^N$ having a desired frame operator with eigenvalues $(\lambda_n)_{n=1}^N\subseteq[2,\infty)$ satisfying $\sum_{n=1}^N\lambda_n=M$. It is this form of the
algorithm which we review for convenience in Table~\ref{fig:originalSTC} and to which we will refer as 
\emph{Spectral Tetris} (STC).

\begin{table}[h]
\centering
\framebox{
\begin{minipage}[h]{6.0in}
\vspace*{0.3cm}
{\sc \underline{STC: Spectral Tetris Construction}}

\vspace*{0.4cm}

{\bf Parameters:}\\[-3ex]
\begin{itemize}
\item Dimension $N\in\NN$.
\item Number of frame elements $M\in\NN$.
\item Eigenvalues $(\lambda_n)_{n=1}^N \subseteq[2,\infty)$ such that
$\sum_{n=1}^N\lambda_n=M$.
\end{itemize}

{\bf Algorithm:}\\[-3ex]
\begin{itemize}
\item[1)] Set $m = 1$.
\item[2)] For $m=1,\ldots,N$ do
\item[3)] \hspace*{0.5cm} Repeat
\item[4)] \hspace*{1cm} If $\lambda_n< 1$ then
\item[5)] \hspace*{1.5cm} $f_m = \sqrt{\frac{\lambda_n}{2}} \cdot e_n+ \sqrt{1-\frac{\lambda_n}{2}} \cdot e_{n+1}$.
\item[6)] \hspace*{1.5cm} $f_{m+1} = \sqrt{\frac{\lambda_n}{2}}\cdot e_n - \sqrt{1-\frac{\lambda_n}{2}} \cdot e_{n+1}$.
\item[7)] \hspace*{1.5cm} $m = m+2$.
\item[8)] \hspace*{1.5cm} $\lambda_{n+1} = \lambda_{n+1} - (2-\lambda_n)$.
\item[9)] \hspace*{1.5cm} $\lambda_n= 0$.
\item[10)] \hspace*{1cm} else
\item[11)] \hspace*{1.5cm} $f_m = e_n$.
\item[12)] \hspace*{1.5cm} $m = m+1$.
\item[13)] \hspace*{1.5cm} $\lambda_n = \lambda_n - 1$.
\item[14)] \hspace*{1cm} end.
\item[15)] \hspace*{0.5cm} until $\lambda_n = 0$.
\item[16)] end.
\end{itemize}

{\bf Output:}\\[-3ex]
\begin{itemize}
\item Unit norm frame $(f_m)_{m=1}^M\subseteq\RR^N$.
\end{itemize}
\vspace*{0.1cm}
\end{minipage}
}
\vspace*{0.2cm}
\caption{The STC algorithm for constructing a unit norm frame with prescribed spectrum in 
$[2,\infty)$.}
\label{fig:originalSTC}
\end{table}

In this paper we entirely focus on constructions in $\RR^N$.
Our main result in is a further generalization of STC to construct frames with specified norms in $\RR^N$.  STC constructs synthesis matrices $F$ with unit norm columns of real entries, whose rows are pairwise orthogonal and
square sum to the desired eigenvalues. The resulting frame operator $FF^*$ is a diagonal matrix, having precisely the desired eigenvalues on its diagonal.  We refer the reader to~\cite{CFMWZ09} or~\cite{CCHKP10} for instructive examples on how the algorithm constructs the desired synthesis matrices by using $2\times 2$ building blocks of the form
\begin{align}\label{2by2}
\begin{bmatrix}
\sqrt{x}&\sqrt{x}\\
\sqrt{1-x}&-\sqrt{1-x}
\end{bmatrix}.
\end{align}
A version of Spectral Tetris to construct frames with any given positive spectrum, not necessarily being contained in
$[2,\infty)$ was given in \cite{CFHWZ12}. It uses modified discrete Fourier transform matrices as building blocks which might be larger than $2 \times 2$ and which result in frames of complex vectors.   

Aside from the fact that Spectral Tetris frames are easy to construct, their major advantage is the sparsity of their synthesis matrices, making them  a valuable tool in applications such as the construction of fusion frames \cite{CFHWZ12}.  In the algorithms STC and PNSTC here, all of the resulting frames are at most $2M$-sparse, i.e. each frame vector has at most $2$ nonzero coordinates.

It has been shown in \cite{CHKK10}, that tight Spectral Tetris frames are optimally sparse in the sense that given $M\geq 2N$, the synthesis matrix of the $M$-element frame for $\RR^N$ constructed via spectral tetris is sparsest within the class of all synthesis matrices of $M$-element unit norm frames for $\RR^N$.

\section{Spectral tetris for general sparse frames}\label{Sec:mainthm}

In this section, we give a generalization of Spectral Tetris which uses $2 \times 2$ blocks to construct frames with a given spectrum and vectors having a specified sequence of norms.  Frames constructed using this algorithm will be at least $2M$-sparse.   Such a construction is not always possible; we give necessary and sufficient conditions under which this generalized construction works to produce the specified frames.

\subsection{The $2 \times 2$ building blocks of Spectral Tetris.}

Spectral tetris (STC) relies on the existence of $2\times 2$ matrices $A(x)$, for given $x>0$, such that 
\begin{enumerate}[(i)]
\item $A(x)$ has orthogonal rows,
\item the columns of $A(x)$ have norm $1$,
\item  and the square of the norm of the first row is $x$.
\end{enumerate}
These properties combined are equivalent to
\[
A(x)A^*(x)=
\begin{bmatrix}
x & 0 \\
0 & 2-x
\end{bmatrix}.
\]
A matrix which satisfies these properties and which is used as a building block of the synthesis matrix constructed by STC, is
\[
A(x)=
\begin{bmatrix}
\sqrt{\frac{x}{2}} & \sqrt{\frac{x}{2}} \\
\sqrt{1-\frac{x}{2}} & -\sqrt{1-\frac{x}{2}}
\end{bmatrix}.
\]

In order to generalize Spectral Tetris to allow for varied vector norms, we modify Property (ii) above so that the columns of the $2 \times 2$ matrices must have prescribed norms $a_1$ and $a_2$.  We will see that these column lengths correspond to the respective norms of the frame vectors we are constructing. Thus, the new $2 \times 2$ building blocks, which we denote $A = A(x, a_1, a_2)$, will have the following properties:
\begin{enumerate}[(i)]
\item $A$ has orthogonal rows,
\item the columns of $A$ have norms $a_1, a_2$ respectively,
\item and the square of the norm of the first row is $x$.
\end{enumerate}
These properties are equivalent to 
\begin{equation}\label{eqn:matrix}
AA^*=
\begin{bmatrix}
x & 0 \\
0 & a_1^2+a_2^2-x
\end{bmatrix}.
\end{equation}
Such a matrix $A(x,a_1,a_2)$ exists under the conditions given in the following lemma and we explicitly construct them in the proof for later use.

\begin{lemma}\label{lem:Amatrix}  A real matrix $A = A(x, a_1, a_2)$ satisfying \eqref{eqn:matrix} exists if and only if 
\begin{enumerate}[(a)]
\item $a_1^2+a_2^2\geq x >0$, and
\item either $a_1^2,a_2^2\geq x$ or $a_1^2,a_2^2\leq x$.
\end{enumerate}
\end{lemma}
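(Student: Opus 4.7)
Write $A = \begin{bmatrix} b_1 & b_2 \\ c_1 & c_2 \end{bmatrix}$. Condition \eqref{eqn:matrix} together with the prescribed column norms is equivalent to the system
\[
b_1^2+b_2^2=x,\quad b_1^2+c_1^2=a_1^2,\quad b_2^2+c_2^2=a_2^2,\quad b_1c_1+b_2c_2=0.
\]
Setting $t:=b_1^2$ makes $b_2^2,\,c_1^2,\,c_2^2$ affine in $t$, and the requirement that all four squared quantities be nonnegative is precisely
\[
\max(0,\,x-a_2^2)\;\leq\;t\;\leq\;\min(x,\,a_1^2),
\]
which already forces $a_1^2+a_2^2\geq x$, giving (a). Squaring the orthogonality relation $b_1c_1=-b_2c_2$ and substituting the affine expressions yields $t(a_1^2-t)=(x-t)(a_2^2-x+t)$, which collapses to the single linear equation $(a_1^2+a_2^2-2x)\,t=x(a_2^2-x)$. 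The existence question for $A$ therefore reduces to deciding when this equation has a solution in the admissible interval.

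If $a_1^2+a_2^2=2x$, the equation forces $x(a_2^2-x)=0$, so $a_2^2=x$ (as $x>0$) and hence $a_1^2=x$; condition (b) holds trivially and any $t\in[0,x]$ works. Otherwise $t$ is uniquely determined, and the analysis is driven by the algebraic identities
\[
t=\frac{x(a_2^2-x)}{a_1^2+a_2^2-2x},\qquad t-x=\frac{x(x-a_1^2)}{a_1^2+a_2^2-2x},\qquad t-a_1^2=\frac{(x-a_1^2)(a_1^2+a_2^2-x)}{a_1^2+a_2^2-2x},
\]
together with the parallel expression $t-(x-a_2^2)=\tfrac{(a_2^2-x)(a_1^2+a_2^2-x)}{a_1^2+a_2^2-2x}$. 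For sufficiency I consider the two cases of (b). When $a_1^2,a_2^2\geq x$, the denominator is positive and each relevant numerator has the correct sign ($x(a_2^2-x)\geq 0$, $x(x-a_1^2)\leq 0$, $(x-a_1^2)(a_1^2+a_2^2-x)\leq 0$), so $t\in[0,\min(x,a_1^2)]$ and the lower bound $x-a_2^2\leq 0\leq t$ is automatic. When $a_1^2,a_2^2\leq x$ with $a_1^2+a_2^2\geq x$, the denominator becomes negative but every numerator also flips sign, so the same four inequalities survive; the lower bound $t\geq x-a_2^2$ now requires the $t-(x-a_2^2)$ identity.

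For necessity, suppose (b) fails, say $a_1^2>x>a_2^2$ (the symmetric case $a_2^2>x>a_1^2$ is handled by swapping columns, equivalently $t\mapsto x-t$). Then $x(a_2^2-x)<0$ and $(x-a_1^2)(a_1^2+a_2^2-x)<0$, since $a_1^2+a_2^2\geq a_1^2>x$. Reading off the identities: if $a_1^2+a_2^2-2x>0$ the first gives $t<0$; if $a_1^2+a_2^2-2x<0$ the third gives $t>a_1^2$; and $a_1^2+a_2^2=2x$ is incompatible with $a_2^2<x$. In every subcase the admissible interval is missed, so no such $A$ exists. Once a valid $t$ is in hand, I produce $A$ explicitly as
\[
A=\begin{bmatrix}\sqrt{t} & \sqrt{x-t} \\ \sqrt{a_1^2-t} & -\sqrt{a_2^2-x+t}\end{bmatrix},
\]
where the minus sign on the last entry is dictated by $b_1c_1+b_2c_2=0$ (the choice is always available because $b_1^2c_1^2=b_2^2c_2^2$ follows from the linear equation). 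The main obstacle is the sign bookkeeping across the four subcases of (b), but the displayed identities were chosen precisely to make this step a mechanical inspection of the signs of the numerator factors against the sign of $a_1^2+a_2^2-2x$.
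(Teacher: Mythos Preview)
Your argument is correct and complete. The approach, however, differs meaningfully from the paper's. The paper parametrizes all $2\times 2$ matrices with orthogonal rows (and nonzero first row) as
\[
A=\begin{bmatrix}\alpha & \beta\\ c\beta & -c\alpha\end{bmatrix},
\]
so that the second row is a scalar multiple of a rotation of the first. The column-norm conditions become $\alpha^2+c^2\beta^2=a_1^2$ and $\beta^2+c^2\alpha^2=a_2^2$; necessity of (b) then drops out in one line, since $c^2\le 1$ forces $a_1^2,a_2^2\le x$ and $c^2\ge 1$ forces $a_1^2,a_2^2\ge x$. The computation $c^2=y/x$ with $y=a_1^2+a_2^2-x$ and a short back-substitution give the explicit block. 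Your route avoids guessing this parametrization: you set $t=b_1^2$, reduce the squared orthogonality relation to the \emph{linear} equation $(a_1^2+a_2^2-2x)t=x(a_2^2-x)$, and then do a sign analysis of the four expressions $t$, $t-x$, $t-a_1^2$, $t-(x-a_2^2)$ using factored identities. What your approach buys is that it is completely systematic and never relies on recognizing the rotational form; what the paper's approach buys is that condition (b) is essentially immediate once one writes down $c^2$, with no case-splitting on the sign of $a_1^2+a_2^2-2x$. The explicit matrices the two methods produce agree (your $t$ equals the paper's $\alpha^2$), so either argument can feed directly into the PNSTC algorithm.
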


\begin{proof}  The orthogonality of the rows of $A$ holds if and only if $A$ is of the form 
\begin{equation}\label{eqn:form}
A=
\begin{bmatrix}
\alpha & \beta \\
c\beta & -c\alpha
\end{bmatrix},
\end{equation} for some nonzero $c \in \mathbb{R}$.  
If we assume that $A$ satisfies \eqref{eqn:matrix}, then the equivalent properties (i)--(iii) yield the equations: \begin{eqnarray*}  \alpha^2+\beta^2&=&x \\ \alpha^2+c^2\beta^2&=&a_1^2 \\ \beta^2+c^2\alpha^2&=&a_2^2. \end{eqnarray*}

Since either $c^2\leq 1$ or $c^2>1$, we see by the properties above that either both $a_1^2,a_2^2\geq x$ or both  $a_1^2,a_2^2\leq x$.  Moreover, 
\[ c^2\beta^2 + c^2\alpha^2 = c^2x = a_1^2 + a_2^2-x \geq 0.\]

Conversely, if we are given values for $x, a_1, a_2$ satisfying properties $(a)$ and $(b)$, we now construct a matrix $A = A(x, a_1, a_2)$ satisfying \eqref{eqn:matrix}.   To obtain orthogonal rows, we let $A$ be written as in \eqref{eqn:form} and must find values of $\alpha, \beta, c$ satisfying the equations 
\begin{itemize}
\item $\alpha^2+\beta^2=x$,
\item $\alpha^2+c^2\beta^2=a_1^2$,
\item $c^2(\alpha^2+\beta^2)=y$, where we let  $y = a_1^2 + a_2^2-x$,
\item $\beta^2+c^2\alpha^2=a_2^2$,
\end{itemize}
Note that this implies that  \[ (1+c^2)(\alpha^2+\beta^2) =a_1^2+a_2^2, \] and thus 
\[ c^2 = \frac{y}{x}. \]
There are two cases to be considered.

\emph{Case} $x=y$:
In this case, $c=1$ so we have $x=y=a_1^2=a_2^2$, so a solution for the matrix $A$ is
\begin{equation}\label{eqn:Avalues1}
A=A(x, a_1, a_2) = 
\begin{bmatrix}
\sqrt{\frac{x}{2}}& \sqrt{\frac{x}{2}}\\
\sqrt{\frac{x}{2}}& - \sqrt{\frac{x}{2}}
\end{bmatrix}.
\end{equation}

\emph{Case} $x \neq y$:
We can solve for the variable $\alpha$ when $c^2 \neq 1$:
\[ \alpha^2 = \frac{a_1^2 -xc^2}{1-c^2} = \frac{a_1^2-y}
{1-\frac{y}{x}}= \frac{x(a_1^2-y)}
{x-y}.\]
From here, we can solve for the value of $\beta$ as well:
\begin{eqnarray*} \beta^2 &=&x-\alpha^2 \\
&=& x- \frac{x(a_1^2-y)}{x-y}\\
&=& \frac{x^2-xy -xa_1^2 + xy}
{x-y} \\
&=& \frac{x(x-a_1^2)}{x-y}.
\end{eqnarray*}
Thus, we have proved that
\begin{equation}\label{eqn:Avalues2} A=A(x, a_1, a_2) =  
\begin{bmatrix}
\sqrt{\frac{x(a_1^2-y)}
{x-y}}& 
\sqrt{\frac{x(x-a_1^2)}{x-y}}\\
\sqrt{\frac{y(x-a_1^2)}{x-y}}&
- \sqrt{\frac{y(a_1^2-y)}
{x-y}}
\end{bmatrix}
\end{equation}
satisfies \eqref{eqn:matrix}.
\end{proof}

\begin{remark} 
We know that the above matrices must be governed by majorization as shown in \cite{KL04,CL10};  i.e.
to construct $A(x,a_1,a_2)$ with the desired properties, we are looking for a matrix with columns 
square summing to $a_1^2$ and $a_2^2$ and rows square
summing to $x$ and $y=a_1^2+a_2^2-x$.  In
majorization language this looks like: 
\[\begin{bmatrix}
m&n\\
p&q
\end{bmatrix}
\]
with 
\[ m^2+n^2 = x ,\mbox{ and } p^2+q^2= y,\]
and
\[ m^2+p^2 = a_1^2,\mbox{ and } n^2+q^2 = a_2^2.\]
Moreover  $a_1^2+a_2^2 = x+y$. Without loss of generality $a_1\ge a_2$.
Now the condition $a_1^2,a_2^2\le x$ in Lemma \ref{lem:Amatrix}
is equivalent to  $x\geq y$ and $a_1^2,a_2^2 \ge x$ is equivalent to $x\le y$.   
\end{remark}

\subsection{Spectral Tetris for prescribed norms}\label{subsec:algorithm}
In Table \ref{fig:PNSTC} we present a modified Spectral Tetris algorithm PNSTC, allowing the construction of frames of prescribed spectrum and non-uniform lengths. We first present an example to demonstrate the algorithm's actions and to introduce the
 \textit{cursor notation} which will be used to further describe the algorithm.

\begin{example}\label{exa:demo}
Let us construct a $6$ element frame in $\RR^4$ with eigenvalues $(\lambda_n)_{n=1}^4=(15,4,1,4)$ and norms
$(a_m)_{m=1}^6=(3,2,\sqrt{3},\sqrt{3},1,2)$.
PNSTF will provide such a frame by generating a $4\times 6$ synthesis matrix of orthogonal rows square summing to the
respective eigenvalues and columns square summing respectively to $(9,4,3,3,1,4)$.
The algorithm starts with a $4\times 6$ matrix of unknown entries and lets a cursor serve as a marker which starts at position $(1,1)$ and moves
either downward or to the right along columns and rows, assigning values to certain entries with each move.
The remaining entries are set to zero when the algorithm terminates.  At each step in the algorithm, one
of the following four cases occurs:

\emph{Case $1$}: If the cursor is at matrix location $(n,m)$ and the entries already set for row $n$ square sum to less than $\lambda_n-a_m^2$, then the current entry $(n,m)$ is set to be $a_m$ and the cursor $(n,m)$ is moved to the right,
i.e., $(n,m):=(n+1,m)$. This is, for example, the case when
the cursor is in position $(1,1)$. The matrix changes as
follows, where we denote the unknown matrix entries by $\cdot$ and
the position of the cursor by $\odot$:

\[
\begin{bmatrix}
\odot&\cdot&\cdot&\cdot&\cdot&\cdot&\\
\cdot&\cdot&\cdot&\cdot&\cdot&\cdot&\\
\cdot&\cdot&\cdot&\cdot&\cdot&\cdot&\\
\cdot&\cdot&\cdot&\cdot&\cdot&\cdot&
\end{bmatrix}
\quad \longrightarrow \quad
\begin{bmatrix}
3&\odot&\cdot&\cdot&\cdot&\cdot\\
\cdot&\cdot&\cdot&\cdot&\cdot&\cdot&\\
\cdot&\cdot&\cdot&\cdot&\cdot&\cdot&\\
\cdot&\cdot&\cdot&\cdot&\cdot&\cdot&
\end{bmatrix}.
\]

\emph{Case $2$}: If the cursor is at $(n,m)$ and the entries already set for row $n$ square sum to more then $\lambda_n-a_m^2$, then the entries $(n,m)$, $(n+1,m)$, $(n,m+1)$, and $(n+1,m+1)$ are set according to lines $10)$ and $11)$ 
or $14)$ and $15)$ of PNSTC, i.e. a $2\times 2$ block of the form in Equation (\ref{eqn:Avalues1}) or (\ref{eqn:Avalues2}) is being inserted, where $x$ is the difference between $\lambda_n$ and the square sum of the entries already placed in row $n$. The cursor is moved to $(n,m) := (n+1,m+2)$. Inserting this $2\times 2$ block makes row $n$ square sum to the desired eigenvalue $\lambda_n$ and row $n+1$ to less then the desired eigenvalue $\lambda_{n+1}$, while it makes the columns $m$ and $m+1$ have norms $a_m$ and $a_{m+1}$ as desired.
This is, for example, the case when the cursor is in position $(1,3)$.
The matrix changes as follows:

\[
\begin{bmatrix}
3&2&\odot&\cdot&\cdot&\cdot&\\
\cdot&\cdot&\cdot&\cdot&\cdot&\cdot&\\
\cdot&\cdot&\cdot&\cdot&\cdot&\cdot&\\
\cdot&\cdot&\cdot&\cdot&\cdot&\cdot&
\end{bmatrix}
\quad \longrightarrow \quad
\begin{bmatrix}
3&2&1&1&\cdot&\cdot\\
\cdot&\cdot&\sqrt{2}&-\sqrt{2}&\odot&\cdot&\\
\cdot&\cdot&\cdot&\cdot&\cdot&\cdot&\\
\cdot&\cdot&\cdot&\cdot&\cdot&\cdot&
\end{bmatrix}.
\]

\emph{Case $3$}: If, after inserting a $2\times 2$ block of the form (\ref{eqn:Avalues1}) or (\ref{eqn:Avalues2}), the cursor
is at location $(n,m)$ and the entries already assigned to row $n$ square sum to exactly $\lambda_n$, then the cursor is moved down to $(n+1,m)$. This is the case when the cursor is $(2,5)$. The matrix changes as follows:

\[
\begin{bmatrix}
3&2&1&1&\cdot&\cdot\\
\cdot&\cdot&\sqrt{2}&-\sqrt{2}&\odot&\cdot&\\
\cdot&\cdot&\cdot&\cdot&\cdot&\cdot&\\
\cdot&\cdot&\cdot&\cdot&\cdot&\cdot&
\end{bmatrix}
\quad \longrightarrow \quad
\begin{bmatrix}
3&2&1&1&\cdot&\cdot\\
\cdot&\cdot&\sqrt{2}&-\sqrt{2}&\cdot&\cdot&\\
\cdot&\cdot&\cdot&\cdot&\odot&\cdot&\\
\cdot&\cdot&\cdot&\cdot&\cdot&\cdot&
\end{bmatrix}.
\]

\emph{Case $4$}: If the cursor is at $(n,m)$ and the entries already set for row $n$ square sum to exactly 
$\lambda_{n}-a_m^2$, then the entry $(n,m)$ is set to $a_m$, and the cursor is moved down and to the right
$(n,m) :=(n+1,m+1)$.
This is, for example, the case when the cursor is $(3,5)$. The matrix changes as follows:

\[
\begin{bmatrix}
3&2&1&1&\cdot&\cdot\\
\cdot&\cdot&\sqrt{2}&-\sqrt{2}&\cdot&\cdot&\\
\cdot&\cdot&\cdot&\cdot&\odot&\cdot&\\
\cdot&\cdot&\cdot&\cdot&\cdot&\cdot&
\end{bmatrix}
\quad \longrightarrow \quad
\begin{bmatrix}
3&2&1&1&\cdot&\cdot\\
\cdot&\cdot&\sqrt{2}&-\sqrt{2}&\cdot&\cdot&\\
\cdot&\cdot&\cdot&\cdot&1&\cdot&\\
\cdot&\cdot&\cdot&\cdot&\cdot&\odot&
\end{bmatrix}.
\]

After performing all steps of PNSTC, the synthesis matrix of the desired frame is
\[\begin{bmatrix}
3&2&1&1&0&0\\
0&0&\sqrt{2}&-\sqrt{2}&0&0\\
0&0&0&0&1&0\\
0&0&0&0&0&2
\end{bmatrix}.
\]

\end{example}

Given a fixed dimension $N$ and frame cardinality $M$, a necessary and sufficient condition on the prescribed norms $(a_m)_{m=1}^M$ of the vectors and the prescribed eigenvalues $(\lambda_n)_{n=1}^N$ of the frame operator for PNSTC to work, is given in the following definition.

\begin{definition}\label{STReady}
We say two sequences $(a_m)_{m=1}^M$ and $(\lambda_n)_{n=1}^N$ are
\emph{Spectral Tetris ready} if
$\sum_{m=1}^Ma_m^2=\sum_{n=1}^N\lambda_n$
and  if there is a partition $0\leq m_1<\cdots<m_N=M$ of the set $\{0, 1, \ldots, M\}$ such that for all $k=1,\ldots,N-1$:
\begin{enumerate}[(i)]
\item $\sum_{m=1}^{m_k}a_m^2 \leq \sum_{n=1}^k\lambda_n<\sum_{m=1}^{m_k+1}a_m^2$  and
\item  if  $\sum_{m=1}^{m_k}a_m^2< \sum_{n=1}^k\lambda_n$, then $m_{k+1}-m_k\geq 2$ and 
\[
a_{m_k+2}^2\geq \sum_{n=1}^k\lambda_n-\sum_{m=1}^{m_k}a_m^2.
\]
\end{enumerate}
\end{definition}

Note that we may permute the given sequences $(a_m)_{m=1}^M$ and  $(\lambda_n)_{n=1}^N$ to make them Spectral Tetris ready, since we make no assumptions about the ordering of the sequence elements.

\begin{table}[h]
\centering
\framebox{
\begin{minipage}[h]{6.0in}
\vspace*{0.3cm}
{\sc \underline{PNSTC: Prescribed Norms Spectral Tetris Construction}}

\vspace*{0.4cm}

{\bf Parameters:}\\[-3ex]
\begin{itemize}
\item Dimension $N\in\NN$.
\item Number of frame elements $M\in\NN$.
\item Eigenvalues $(\lambda_n)_{n=1}^N$  and norms of the frame vectors $(a_m)_{m=1}^M$ such that 
$(\lambda_n)_{n=1}^N$  and $(a_m^2)_{m=1}^M$ are Spectral Tetris ready.
\end{itemize}

{\bf Algorithm:}\\[-3ex]
\begin{itemize}
\item[1)] Set $m=1$.
\item[2)] For $n=1,\ldots,N$ do
\item[3)] \hspace*{0.5cm}Repeat
\item[4)] \hspace*{1cm}If $\lambda_n\geq a_m^2$ then
\item[5)] \hspace*{1.5cm}$f_m=a_me_n$.
\item[6)] \hspace*{1.5cm}$\lambda_n=\lambda_n-a_m^2$.
\item[7)] \hspace*{1.5cm}$m=m+1$.
\item[8)] \hspace*{1cm}else
\item[9)] \hspace*{1.5cm}If $2\lambda_n=a_m^2+a_{m+1}^2$, then
\item[10)] \hspace*{2cm}$f_{m}=
\sqrt{\frac{\lambda_n}{2}}\cdot (e_n+e_{n+1}).$
\item[11)] \hspace*{2cm}$f_{m+1}=
\sqrt{\frac{\lambda_n}{2}}\cdot (e_n-e_{n+1}).$
\item[12)] \hspace*{1.5cm}else
\item[13)] \hspace*{2cm}$y=a_m^2+a_{m+1}^2-\lambda_n$.
\item[14)] \hspace*{2cm}$f_{m}=
\sqrt{\frac{\lambda_n(a_m^2-y)}{\lambda_n-y}}\cdot e_n
+\sqrt{\frac{y(\lambda_n-a_m^2)}{\lambda_n-y}}\cdot e_{n+1}.$
\item[15)] \hspace*{2cm}$f_{m+1}=
\sqrt{\frac{\lambda_n(\lambda_n-a_m^2)}{\lambda_n-y}}\cdot e_n
-\sqrt{\frac{y(a_m^2-y)}{\lambda_n-y}}\cdot e_{n+1}.$
\item[16)] \hspace*{1.5cm}end.
\item[17)] \hspace*{1.5cm}$\lambda_{n+1}=\lambda_{n+1}-(a_m^2+a_{m+1}^2-\lambda_n)$.
\item[18)] \hspace*{1.5cm}$\lambda_n=0$.
\item[19)] \hspace*{1.5cm}$m=m+2$.
\item[20)] \hspace*{1cm}end.
\item[21)] \hspace*{0.5cm}until $\lambda_n=0$.
\item[22)] end.
\end{itemize}

{\bf Output:}\\[-3ex]
\begin{itemize}
\item Frame $(f_m)_{m=1}^M\subseteq\RR^N$.
\end{itemize}
\vspace*{0.1cm}
\end{minipage}
}
\vspace*{0.2cm}
\caption{The PNSTC algorithm for constructing a frame with prescribed spectrum and norms.}
\label{fig:PNSTC}
\end{table}

\begin{definition}\label{def:complete}
The STC and PNSTC algorithms insert $2\times 2$ blocks into the synthesis matrices under construction. We call any entry of the synthesis matrix constructed by either one of these algorithms a \emph{terminal point} if it belongs to the second row of some $2\times 2$ block and an \emph{initial point}  if it belongs to the first row of a $2\times 2$ block.
Moreover, we say that a row $n$ of the synthesis matrix the algorithms construct is \textit{complete at column $m$} if the entries of row $n$ are zero for all columns to the right of column $m$, i.e. the square sums of the entries in row $n$ from columns 1 through $m$ is equal to $\lambda_n$.
\end{definition}

We now show that the properties given in Definition~\ref{STReady} are exactly the necessary and sufficient conditions which allow PNSTC to construct a frame with prescribed norms having a given spectrum.

\begin{theorem}\label{thm:norm}
Given $(a_m)_{m=1}^M\subseteq(0,\infty)$ and $(\lambda_n)_{n=1}^N\subseteq(0,\infty)$, PNSTC can be used to construct a frame
$(f_m)_{m=1}^M$ for $\mathbb{R}^N$ such that $\|f_m\|=a_m$ for $m=1,\ldots,M$ and having eigenvalues $(\lambda_n)_{n=1}^N$ if and only if there exist permutations that make the sequences
$(a_m)_{m=1}^M$ and $(\lambda_n)_{n=1}^N$ Spectral Tetris ready.
\end{theorem}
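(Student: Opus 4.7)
The plan is to prove each direction by tracking how the PNSTC cursor interacts with the candidate partition $0\leq m_1<\cdots<m_N=M$. Given any execution of PNSTC, let $m_k$ denote the largest column index such that every column $m\leq m_k$ has nonzero entries only in rows $1,\dots,k$, so $m_k$ records the column at which row $k$ has been fully completed. The strict monotonicity $m_k<m_{k+1}$ and the identity $m_N=M$ will be immediate from the construction.

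For the sufficiency direction, assume the sequences are Spectral Tetris ready with partition $(m_k)$. I would induct on $k$ with inductive hypothesis that upon entry to row $k$ the cursor lies at column $m_{k-1}+1$, row $k-1$ has just been completed, and the mass already deposited into row $k$ equals $\sum_{n=1}^{k-1}\lambda_n-\sum_{m=1}^{m_{k-1}}a_m^2$, which is either $0$ or the ``spillover'' $y$ from the $2\times 2$ block that completed row $k-1$. The inductive step then splits on condition~(i). If $\sum_{m=1}^{m_k}a_m^2=\sum_{n=1}^{k}\lambda_n$, then placing $a_{m_{k-1}+1}e_k,\dots,a_{m_k}e_k$ through Cases~1 and~4 of PNSTC finishes row $k$ exactly. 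If the left inequality in~(i) is strict, the same singleton placements leave $x=\sum_{n=1}^{k}\lambda_n-\sum_{m=1}^{m_k}a_m^2>0$ unfilled in row $k$, while the right inequality of~(i) yields $a_{m_k+1}^2>x$, triggering Case~2. Condition~(ii) then provides $m_{k+1}-m_k\geq 2$, so column $m_k+2$ genuinely exists and carries norm $a_{m_k+2}$, and the bound $a_{m_k+2}^2\geq x$ together with $a_{m_k+1}^2>x$ verifies hypothesis~(b) of Lemma~\ref{lem:Amatrix}, while $a_{m_k+1}^2+a_{m_k+2}^2>x$ verifies~(a). The lemma therefore produces the desired block $A(x,a_{m_k+1},a_{m_k+2})$, completing row $k$ and leaving the cursor ready for row $k+1$.

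For the necessity direction, suppose PNSTC runs to completion and read off $(m_k)$ from the output as above. The left inequality in~(i) is immediate because the first $m_k$ columns deposit mass only into rows $\leq k$. For the right inequality, the definition of $m_k$ forces column $m_k+1$ to have a nonzero entry in some row $>k$, which under PNSTC occurs only inside a Case~2 block triggered at row $k$ with $a_{m_k+1}^2>x$. If additionally the left inequality in~(i) is strict (equivalently $x>0$), then a $2\times 2$ block was actually inserted, so its second column $m_k+2$ exists, giving $m_{k+1}\geq m_k+2$; the necessity part of Lemma~\ref{lem:Amatrix}~(b), combined with $a_{m_k+1}^2>x$, then forces $a_{m_k+2}^2\geq x$, which is exactly condition~(ii).

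The main obstacle I anticipate is the bookkeeping for row $k+1$ across a Case~2 block: immediately after such a block, row $k+1$ already carries square-sum $a_{m_k+1}^2+a_{m_k+2}^2-x$, and this must not exceed $\lambda_{k+1}$ for the outer loop to continue in a consistent state. A short rearrangement shows this is equivalent to $\sum_{m=1}^{m_k+2}a_m^2\leq\sum_{n=1}^{k+1}\lambda_n$, which is precisely condition~(i) applied at index $k+1$ together with $m_{k+1}\geq m_k+2$. Once this invariant is verified, both directions close by routine induction on the row index.
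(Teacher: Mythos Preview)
Your approach is the paper's: both directions hinge on the partition $(m_k)$, verifying the hypotheses of Lemma~\ref{lem:Amatrix} from conditions (i)--(ii), and bounding the spillover into row $k+1$ via the rearrangement $\sum_{m=1}^{m_k+2}a_m^2\leq\sum_{n=1}^{k+1}\lambda_n$, which is exactly the paper's closing computation.

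Two bookkeeping corrections to your inductive hypothesis. First, when row $k-1$ finishes with a $2\times2$ block, the cursor enters row $k$ at column $m_{k-1}+3$, not $m_{k-1}+1$; columns $m_{k-1}+1$ and $m_{k-1}+2$ are already occupied by that block, so your singleton range must begin at $m_{k-1}+3$ in that case. Second, the mass already sitting in row $k$ at that moment is $\sum_{m=1}^{m_{k-1}+2}a_m^2-\sum_{n=1}^{k-1}\lambda_n$ (the second-row square sum $y$ of the previous block), not $\sum_{n=1}^{k-1}\lambda_n-\sum_{m=1}^{m_{k-1}}a_m^2$, which is the first-row square sum $x$ of that block. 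Neither slip affects your later work with $x=\sum_{n=1}^{k}\lambda_n-\sum_{m=1}^{m_k}a_m^2$ or the verification of Lemma~\ref{lem:Amatrix}, so the argument goes through once the invariant is restated correctly.
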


\begin{proof}
For the forward direction, assume that PNSTC will produce a frame with eigenvalues $(\lambda_n)_{n=1}^N$ and norms $(a_m)_{m=1}^M$.  The partition $\{m_k\}_{k=1}^N$ from Definition \ref{STReady}  is given by the unique coordinate  $(k, m_k)$ in each row $k$ such that no further $1 \times 1$ blocks can be inserted in row $k$.  In other words, either row $k$ is complete at column $m_k$ or row $k$ requires a $2 \times 2$ block beginning at column $m_k+1$ and will be complete at column $m_k +2$.  ( In Example \ref{exa:demo}, the coordinates $(k,m_k+1)$ are the cursor locations when we enter Cases 2, 3, or 4.)   Because the algorithm was able to complete successfully, Properties (i) and (ii) from Definition \ref{STReady} necessarily hold for each $k=1, 2, \ldots, N-1$.

Conversely, assume $(a_m)_{m=1}^M$ and $(\lambda_n)_{n=1}^N$ are Spectral Tetris ready. We must demonstrate that the PNSTC algorithm produces the $N \times M$ synthesis matrix of the frame with these corresponding properties.  It is sufficient to show that, for each $k \in \{1, 2, \ldots, N-1\}$,  if the cursor is at position $(k, m_k)$ and if a $2 \times 2$ block is needed, then the conditions of Lemma \ref{lem:Amatrix} are satisfied and therefore a $2 \times 2$ block exists that will complete row $k$ at column $m_k+2$.

Assume that for a given $k$, we have already constructed the first $m_k$ columns of the synthesis matrix, i.e. the cursor is at position $(k,m_k)$.
If $\sum_{n=1}^k\lambda_n =  \sum_{m=1}^{m_k}a_m^2$, then row $k$ is complete at column $m_k$ and we proceed
by moving the cursor to $(k+1,m_k+1)$ without inserting a $2\times 2$ block. This is done in lines $7)-8)$ of PNSTC.   Otherwise, by the definition of $m_k$,  we have $\sum_{n=1}^k\lambda_n > \sum_{m=1}^{m_k}a_m^2$ and we require a $2 \times 2$ block to complete row $k$. We show that the conditions from Lemma \ref{lem:Amatrix} are satisfied which means that the desired $2 \times 2$ block exists. In particular, we need to show that the matrix
$A=A\left(x,a_{m_k+1},a_{m_k+2}\right)$ with
\[
x = \sum_{n=1}^k\lambda_n-\sum_{m=1}^{m_k}a_m^2
\]
exists, since insertion of this block will make row $k$ square sum to the desired $\lambda_k$.
Moreover, we also have to show that the second row of $A$ square sums to at most $\lambda_{k+1}$, so that we do not reach an impediment when we construct row $k+1$.

Note that since we have assumed our sequences are Spectral Tetris ready, we have by Definition
\ref{STReady} (i)
\[
a_{m_k+1}^2\geq \sum_{n=1}^k\lambda_n-\sum_{m=1}^{m_k}a_m^2=x,  
\]
which clearly gives
\[
a_{m_k+1}^2+a_{m_k+2}^2\geq \sum_{n=1}^k\lambda_n-\sum_{m=1}^{m_k}a_m^2>0.
\]
Definition \ref{STReady} (ii) also gives
\[
a_{m_k+2}^2\geq \sum_{n=1}^k\lambda_n-\sum_{m=1}^{m_k}a_m^2=x.
\]
Thus the desired $2 \times 2$ block $A(x, a_{m_k+1}, a_{m_k+2})$ does exist by Lemma \ref{lem:Amatrix}.
It remains to show that the second row of $A$ square sums at most to $\lambda_{k+1}$, i.e. that
\[
a_{m_k+1}^2+a_{m_k+2}^2-\left(\sum_{n=1}^k\lambda_n-\sum_{m=1}^{m_k}a_m^2\right)\leq\lambda_{k+1}.
\] 
This condition, however, is equivalent to
\[
\sum_{m=1}^{m_k+2}a_m^2\leq \sum_{n=1}^{k+1}\lambda_n,
\]
which holds by Definition \ref{STReady} (i), since by Definition \ref{STReady} (ii) $m_k+2\leq m_{k+1}$.
We have shown that any time a $2 \times 2$ block is required, the conditions from Lemma \ref{lem:Amatrix} are satisfied and therefore, the required matrix exists.  This proves that PNSTC will produce a sparse frame with the given properties. 

\end{proof}

\subsection{Examples}

\begin{example}
There are choices of prescribed norms and eigenvalues which satisfy the majorization condition from \cite{CL10, KL04}, i.e. for which a frame with these given parameters exists, but for which PNSTC cannot be used to construct such a frame because no ordering of the sequence of eigenvalues and sequence of norms is Spectral Tetris ready. An example of this kind is a $4$-element $\frac{13}{3}$-tight frame in $\RR^3$ with norms $(2,2,2,1)$. \end{example}

\begin{example}  Sometimes, one ordering of the eigenvalues and norms is Spectral Tetris ready while another is not.  Given a sequence of norms $(\sqrt{3}, \sqrt{3}, 1)$ and eigenvalues $(5, 2)$, by majorization there exists a frame for $\mathbb{R}^2$ having these norms and eigenvalues. We find, however, that PNSTC cannot be performed for the sequences in the given order.  The first step would be 
\[ \begin{bmatrix} \sqrt{3}&\odot&\cdot \\ 
0&\cdot&\cdot 
\end{bmatrix}.
\]
Completing row $1$ requires a $2 \times 2$ block with columns square summing to $3$ and $1$ and rows both square summing to $2$. By Lemma \ref{lem:Amatrix} such a block does not exist --- the column square norms $3$ and $1$ are neither both greater than nor both less than  the row square norm of $2$. However,
rearranging the eigenvalues to the order $(2, 5)$ allows PNSTC to construct the desired frame. Its synthesis matrix is
\[ \begin{bmatrix} 1&1 &0 \\ \sqrt{2}&-\sqrt{2}&1 \end{bmatrix}. \]

\end{example}

\begin{example}  The ordering of the eigenvalues may not be monotone in order for Spectral Tetris to work.  Let a sequence of norms be $(a_m)_{m=1}^4 = ( \sqrt{3}, \sqrt{3}, \sqrt{2}, 1)$ and the eigenvalues be $(\lambda_n)_{n=1}^3 = (4, 3, 2)$.  Given the nonincreasing order of the norms of the vectors, the only ordering of the eigenvalues which admits Spectral Tetris is $(3, 4, 2)$.   To see this, observe that neither ordering where the eigenvalue $2$ comes first will allow for the condition $a_1^2 + a_2^2 \leq \lambda_1 + \lambda_2$ to be satisfied.  The same holds for the ordering $(3, 2, 4)$.
Moreover, both cases for which the eigenvalue $4$ comes first will require the first step in PNSTC to build the first vector to be $\sqrt{3}\cdot e_1$.  Then $a_2^2 + a_3^2 \geq \lambda_1 + \lambda_2  - a_1^2$, and PNSTC cannot proceed.  
In the ordering $(3, 4, 2)$, however, we don't find the same impediments and the synthesis matrix produced by PNSTC is
\[ \begin{bmatrix}    \sqrt{3}&0 &0 &0  \\ 0&\sqrt{3} &0 &1  \\ 0& 0&\sqrt{2} & 0 \end{bmatrix}. \]

\end{example}

\begin{example}
We construct an example where PNSTC does not work when either the eigenvalues or the norms are in monotonic order, but does have a non-monotone configuration which is Spectral Tetris ready.  We do this by combining an example for which the norms cannot be in a monotone order with an example for which the eigenvalues cannot be in a monotone order; we scale one of these to force a separation between the two systems.

It is straightforward to verify that the norms $(2,2,2,1)$ and eigenvalues $(6, 4, 3)$ are only spectral-tetris ready when placed in the order $(2, 2, 2, 1)$  and $(3, 6, 4)$.   By Example \ref{exa:permutenorms} (which we scale by 30),  the norms $(\sqrt{210}, \sqrt{210}, \sqrt{180}, \sqrt{30}, \sqrt{30})$ and eigenvalues $(220, 220, 220)$ are Spectral Tetris ready only when the norms are in one of the orderings  $(\sqrt{210}, \sqrt{180}, \sqrt{30}, \sqrt{30}, \sqrt{210})$ or  $(\sqrt{210}, \sqrt{180}, \sqrt{30}, \sqrt{210},  \sqrt{30})$.

Therefore, we find that the sequence of norms $(\sqrt{210}, \sqrt{210}, \sqrt{180}, \sqrt{30}, \sqrt{30}, 2, 2, 2, 1)$ and the eigenvalues $(220, 220, 220, 6, 4, 3)$ are not Spectral Tetris ready in any monotone ordering, but are Spectral Tetris ready in the non-monotone orderings \[ ( \sqrt{210}, \sqrt{180}, \sqrt{30}, \sqrt{30}, \sqrt{210}, 2, 2, 2, 1)\quad \textrm{and} \quad (220, 220, 220, 3, 6, 4).\]
\end{example}

\subsection{An easily-checked condition for sparse frames}

Given sequences of norms and eigenvalues, it may be time-consuming to find permutations of the prescribed eigenvalues and norms which are Spectral Tetris ready.  We present an easily-verified sufficient condition on the prescribed sequences under which PNSTC can be performed.

\begin{proposition} 
Let $(a_m)_{m=1}^M\subseteq(0,\infty)$ and $(\lambda_n)_{n=1}^N\subseteq(0,\infty)$ be increasing sequences such that  $\sum_{m=1}^Ma_m^2 = \sum_{n=1}^N \lambda_n$ and
\begin{align}\label{easy}
a_{M-2\ell}^2+a_{M-2\ell-1}^2 \le \lambda_{N-\ell} 
\end{align}
for $\ell= 0,1,\ldots,N-1$.
Then $(a_m)_{m=1}^M$ and $(\lambda_n)_{n=1}^N$ are Spectral Tetris ready, hence by Theorem \ref{thm:norm}, PNSTC can construct a frame $(f_m)_{m=1}^M$ for $\mathbb{R}^N$
with $\|f_m\|=a_m$ for $m=1,\ldots,M$ and with eigenvalues $(\lambda_n)_{n=1}^N$.
In particular, PNSTC can be performed if $a_{M}^2+a_{M-1}^2 \le \lambda_1$.
\end{proposition}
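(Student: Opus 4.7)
My plan is to take the partition produced naturally by the PNSTC algorithm itself, namely
\[ m_k := \max\{\,m : T_m \leq S_k\,\}, \quad \text{where } T_m := \sum_{j=1}^m a_j^2, \ S_k := \sum_{n=1}^k \lambda_n, \]
and verify that it satisfies Definition~\ref{STReady}. That $m_N = M$ is immediate from the trace identity $T_M = S_N$, and condition~(i) holds by construction.

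The heart of the matter is the key estimate
\[ m_k \;\leq\; M - 2(N-k) \qquad (k = 1, \ldots, N-1), \]
which encodes the requirement that enough columns remain for the later rows. I would obtain it by summing the hypothesis inequalities over $\ell = 0, \ldots, N-k-1$; the pairs $(M-2\ell-1, M-2\ell)$ partition the tail $\{M-2(N-k)+1, \ldots, M\}$ into disjoint adjacent pairs, so
\[ T_M - T_{M-2(N-k)} \;=\; \sum_{\ell=0}^{N-k-1}\bigl(a_{M-2\ell}^2 + a_{M-2\ell-1}^2\bigr) \;\leq\; \sum_{\ell=0}^{N-k-1}\lambda_{N-\ell} \;=\; S_N - S_k. \]
Combined with $T_M = S_N$, this yields $T_{M-2(N-k)} \geq S_k$, and then $T_{M-2(N-k)+1} > S_k$ by positivity of the $a_m$, which forces $m_k \leq M-2(N-k)$. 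I expect this to be the main step --- it is where the full strength of~\eqref{easy} is consumed.

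Granting the key estimate, the remaining verifications are routine. The norm clause of Definition~\ref{STReady}(ii) is automatic from monotonicity of $(a_m)$: $T_{m_k+1} > S_k$ gives $a_{m_k+1}^2 > S_k - T_{m_k}$, and $a_{m_k+2}^2 \geq a_{m_k+1}^2$. For the gap clause $m_{k+1} \geq m_k + 2$ (needed only when $T_{m_k} < S_k$), the key estimate yields $m_k + 2 \leq M - 2(N-k-1)$, so monotonicity of $(a_m)$ together with the hypothesis at $\ell = N-k-1$ gives
\[ a_{m_k+1}^2 + a_{m_k+2}^2 \;\leq\; a_{M-2N+2k+1}^2 + a_{M-2N+2k+2}^2 \;\leq\; \lambda_{k+1}, \]
whence $T_{m_k+2} \leq T_{m_k} + \lambda_{k+1} \leq S_{k+1}$, as required. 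The strict monotonicity $m_k < m_{k+1}$ in the equality case $T_{m_k} = S_k$ follows by the same argument with one term in place of two, using only $a_{m_k+1}^2 \leq \lambda_{k+1}$.

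The final clause of the proposition is then immediate: if $a_M^2 + a_{M-1}^2 \leq \lambda_1$, monotonicity of both sequences yields $a_{M-2\ell}^2 + a_{M-2\ell-1}^2 \leq a_M^2 + a_{M-1}^2 \leq \lambda_1 \leq \lambda_{N-\ell}$ for every $\ell = 0, \ldots, N-1$, so~\eqref{easy} holds and Theorem~\ref{thm:norm} applies.
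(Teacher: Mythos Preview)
Your argument is correct and follows essentially the same route as the paper: both define $m_k$ via the natural partial-sum condition, obtain the key estimate $m_k \le M-2(N-k)$ by summing the hypothesis inequalities over the tail (the paper phrases this as comparing $\sum_{m=m_k+1}^M a_m^2$ with $\sum_{n=k+1}^N \lambda_n$), and then use monotonicity together with the hypothesis at $\ell=N-k-1$ to conclude $a_{m_k+1}^2+a_{m_k+2}^2\le\lambda_{k+1}$ and hence $m_{k+1}\ge m_k+2$. Your write-up is in fact a bit more explicit than the paper's about the summation step and about the equality case $T_{m_k}=S_k$, and you also spell out the ``in particular'' clause, which the paper leaves implicit.
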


Note that the property \eqref{easy} together with $\sum_{m=1}^Ma_m^2 = \sum_{n=1}^N \lambda_n$ imply that $M \geq 2N$.

\begin{proof}
We show that we can perform PNSTC on the increasing sequences $(a_m)_{m=1}^M$ and 
$(\lambda_n)_{n=1}^N$ by verifying the sequences are Spectral Tetris ready.

Property \eqref{easy} when $\ell = N-1$ guarantees that $a_1^2 \leq a_{M-2N+1}^2 \leq \lambda_1$.  For each $k\in\{1,\ldots,N-1\}$, let $m_k$ be the unique index for which 
\begin{equation}\label{eqn:mk} \sum_{m=1}^{m_k}a_m^2 \le \sum_{n=1}^k\lambda_n < \sum_{m=1}^{m_k+1}a_m^2.\end{equation}
This provides a partition $\{m_k\}_{k=1}^N$ of $\{0, 1, \ldots, M\}$ satisfying Definition \ref{STReady}(i).  Since the norms are arranged in increasing order, 
\begin{equation}\label{eqn:mk2}  \sum_{n=1}^k \lambda_n - \sum_{m=1}^{m_k}a_m^2 \le a_{m_k+1}^2 \le a_{m_k+2}^2.\end{equation}  Then because the sequences $(a_m^2)_{m=1}^M$ and $(\lambda_n)_{n=1}^N$ have equal sum, the first inequality in \eqref{eqn:mk} gives
\begin{equation}\label{eqn:mk3} \sum_{m=m_k +1}^M a_m^2 \geq \sum_{n=k+1}^N \lambda_n .\end{equation}
For ease of notation, let $p=N-k$, so the above inequality becomes
\[\sum_{m=m_{N-p} +1}^M a_m^2 \geq \sum_{n=N-p+1}^N \lambda_n.\]
By the property in \eqref{easy} for $\ell = p-1$, we have \[\lambda_{N-(p-1)} \geq a_{M-2(p-1)}^2 + a_{M-2(p-1)-1}^2 = a_{M-2p+2}^2 + a_{M-2p+1}^2.\]   
This gives us a starting point for a lower bound in the previous inequality: 
\[\sum_{m=m_{N-p} +1}^M a_m^2 \geq \sum_{n=N-p+1}^N \lambda_n \geq \sum_{m=M-2p+1}^M a_m^2 .\]
Therefore, the starting indices in the outer sums satisfy 
\[m_{N-p}+1 \leq M-2p+1.\]
Since the sequence $(a_m)_{m=1}^M$ is increasing, we have 
\[ a_{m_{N-p}+1}^2 + a_{m_{N-p}+2}^2 \leq a_{M-2p+1}^2 + a_{M-2p+2}^2 \leq \lambda_{N-(p-1)}.\]
Translating back to using $k = N-p$, we have 
\[ a_{m_k+1}^2 + a_{m_k+2}^2 \leq \lambda_{k+1},\] which implies $m_{k+1}  \geq m_k + 2$ and together with   \eqref{eqn:mk}, confirms that the criteria in Definition \ref{STReady}(ii) are satisfied, and hence PNSTC can be performed.
\end{proof}

\section{Special Case: Unit-norm tight frames of redundancy less than $2$}\label{less2}

The original STC algorithm from \cite{CFMWZ09} is known to successfully construct unit norm tight frames of $M$ vectors in $\RR^N$, provided $M \geq 2N$.  In this section, we classify when Spectral Tetris can be used to construct unit norm tight frames when $N < M < 2N$.

\begin{theorem}\label{thm:k-inequality}
For $N < M < 2N$ and $\lambda = \frac{M}{N}$ the following are equivalent:

\begin{enumerate}[(i)]
\item STC will successfully produce a unit norm tight frame $(f_m)_{m=1}^M$
for $\RR^N$.
\item  For all $1\le k \le N-1$, if $k\lambda$ is not an integer, then we have
\begin{equation} \label{eqn:tetris}  \lfloor k\lambda \rfloor \le (k+1)\lambda -2,\end{equation} where $\lfloor x \rfloor$ is the greatest integer less than or equal to $x$.
\end{enumerate}
\end{theorem}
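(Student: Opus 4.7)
I would prove Theorem~\ref{thm:k-inequality} by tracking the state of STC row by row under the assumption $\lambda = M/N \in (1,2)$, since under this constraint every eigenvalue begins equal to the same value $\lambda$. For each $k\in\{1,\dots,N\}$, let $m_k$ denote the number of columns placed by the time row $k$ is complete, and let $u_{k+1}\geq 0$ denote the squared mass those columns deposit into row $k+1$; if row $k$ is closed by a $2\times 2$ block with row-$k$ mass $x_k\in(0,1)$, then $u_{k+1}=2-x_k\in(1,2)$, while $u_{k+1}=0$ if no $2\times 2$ block is used (this latter case forces row $k$ to be ``full already,'' i.e.\ $u_k=\lambda$). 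Since each column is a unit vector, restricting the $m_k$ columns to their first $k$ coordinates yields the invariant
\[
m_k \;=\; k\lambda + u_{k+1}, \qquad k=1,\dots,N.
\]

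\textbf{From the invariant to the inequality.} The invariant together with $m_k\in\mathbb{Z}$ and $u_{k+1}\in\{0\}\cup(1,2)$ forces
\[
u_{k+1}=\begin{cases}0, & k\lambda\in\mathbb{Z},\\ 2-\{k\lambda\}, & k\lambda\notin\mathbb{Z},\end{cases}
\]
where $\{x\}=x-\lfloor x\rfloor$. STC's transition from row $k$ to row $k+1$ subtracts this overflow from the budget $\lambda$ of row $k+1$ (lines 8--9 of Table~\ref{fig:originalSTC}), so the algorithm can continue iff $u_{k+1}\leq\lambda$. When $k\lambda\in\mathbb{Z}$ the inequality $u_{k+1}=0\leq\lambda$ is automatic, and when $k\lambda\notin\mathbb{Z}$ it reads $2-\{k\lambda\}\leq\lambda$, which is algebraically equivalent to $\lfloor k\lambda\rfloor\leq(k+1)\lambda-2$, i.e.\ condition~(ii). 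Termination at $m_N=M$ with $u_{N+1}=0$ is automatic because $N\lambda=M\in\mathbb{Z}$.

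\textbf{Induction and the main obstacle.} I would then conclude by a straightforward induction on $k$, starting from $u_1=0\leq\lambda$: assuming (ii) holds, the overflow $u_{k+1}$ computed above satisfies $u_{k+1}\leq\lambda$, so the next $2\times 2$ block is well-defined and STC proceeds; conversely, the first violation of (ii) at some index $k$ gives $u_{k+1}>\lambda$, the updated $\lambda_{k+1}$ becomes negative, and STC's $2\times 2$ branch tries to take a square root of a negative number. The subtle point is the borderline case $u_{k+1}=\lambda$, corresponding to $\{k\lambda\}=2-\lambda$ and hence $(k+1)\lambda\in\mathbb{Z}$: here row $k+1$ is already full from the overflow and the algorithm must advance to row $k+2$ without placing any further vector in row $k+1$. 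Checking that STC handles this ``ghost row'' transition (so the invariant $m_{k+1}=(k+1)\lambda+u_{k+2}$ is preserved) is the main bookkeeping obstacle, but a routine one.
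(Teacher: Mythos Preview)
Your proposal is correct and follows essentially the same argument as the paper: both compute the overflow from row $k$ into row $k+1$ as $2-\{k\lambda\}$ when $k\lambda\notin\mathbb{Z}$ and observe that STC continues precisely when this overflow is at most $\lambda$, which rearranges to $\lfloor k\lambda\rfloor\le (k+1)\lambda-2$. The only cosmetic difference is that the paper first reduces to $\gcd(M,N)=1$ so that $k\lambda$ is never an integer for $1\le k\le N-1$, whereas you handle the integer case (and the borderline $u_{k+1}=\lambda$ ``ghost row'') directly via the invariant $m_k=k\lambda+u_{k+1}$; both treatments amount to the same computation.
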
 

\begin{proof}
If $M$ and $N$ are not relatively prime, let $P = \gcd(M,N)$, $M = PM'$ and $N = PN'$, i.e. $\gcd(M',N') = 1$.  Then STC  constructs a synthesis matrix of the block form
\begin{align*}
\begin{bmatrix}
F&  &          &  \\
  &F&          &  \\  
  &  &\ddots&  \\ 
  &  &          &F
\end{bmatrix}
\end{align*} 
composed of $P$ copies of the matrix $F$, where $F$ is the STC output synthesis matrix for $M'$ unit vectors in $N'$ dimensions.
Therefore, it is sufficient to assume that $M$ and $N$ are relatively prime. For each $k=2,\ldots,N-1$, we know that the product $k\lambda$ is not an integer and thus the $k$-th row of the synthesis matrix formed by STC must contain both the second row of a $2 \times 2$ block (terminal points) and the first row of a $2 \times 2$ block (initial points).  In other words, no row except possibly the last one can be completed with a $1 \times 1$ block.  By \cite{CFMWZ09}, Spectral Tetris can proceed from row $k$ to row $k+1$  if and only if the two terminal points contained in row $k+1$, which we denote $x_1$ and $x_2$, satisfy
\[
x_1^2+x_2^2\leq\lambda
\] 
for each $k=1, 2, \ldots N-1$.  The square sum of the entries of the first $k$ completed rows is $k\lambda$ and the square sum of the two initial points
contained in row $k$ is $k\lambda-\lfloor k\lambda \rfloor$. Since the square sum of the entries of the two columns that contain the terminal points of row $k+1$ is $2$, we therefore have
\[
x_1^2+x_2^2=2-(k\lambda-\lfloor k\lambda \rfloor).
\]
Thus Spectral Tetris works if and only if 
\[
2-(k\lambda-\lfloor k\lambda \rfloor)\leq\lambda
\]
for every $k=1,\ldots,N-1$.
\end{proof}

As it happens, Theorem \ref{thm:k-inequality} has independently been derived in~\cite{Lemvig}.
We use the condition in Theorem \ref{thm:k-inequality} to completely characterize the conditions under which Spectral Tetris will work to compute unit norm tight frames.  This condition will be completely determined by the value of the frame bound $\lambda$.  Since STC always works when $\lambda \geq 2$ and no tight frames exist when $\lambda < 1$, we only consider $1 \leq \lambda < 2$ in the following results.

Note that for a fixed positive integer $k$, the inequality $2- \frac{1}{k} \leq \lambda < 2$ is equivalent to  $\lfloor k\lambda \rfloor = 2k-1.$ Thus
\eqref{eqn:tetris} holds for a fixed value of $k$ if \[  \lambda \geq  2-\frac{1}{k+1}.\] and fails to hold for fixed $k$ if 
\[ 2-\frac{1}{k} \leq \lambda < 2-\frac{1}{k+1}.\]

\begin{theorem}\label{thm:characterization}  The algorithm STC can be performed to generate a unit norm tight frame of $M$ vectors in $\RR^N$ if and only if $\lambda := \frac{M}{N} \geq 2$ or $\lambda$ is of the form \begin{equation}\label{eqn:cutoff} \lambda = \frac{2L-1}{L} 
\end{equation}
for some positive integer $L$.  
\end{theorem}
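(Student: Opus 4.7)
The plan is to reduce the inequality condition of Theorem \ref{thm:k-inequality} to a single arithmetic constraint on $\lambda = M/N$ written in lowest terms. The case $\lambda \geq 2$ is already covered by the original STC paper (as recalled in Section \ref{Sec:background}), so the real work is to establish the equivalence for $1 \leq \lambda < 2$, where every condition of Theorem \ref{thm:k-inequality} is nontrivial.

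The first step is to rewrite inequality \eqref{eqn:tetris} in terms of fractional parts. Since $(k+1)\lambda - 2 - \lfloor k\lambda \rfloor = \{k\lambda\} + \lambda - 2$, the condition $\lfloor k\lambda \rfloor \leq (k+1)\lambda - 2$ is simply
\[
\{k\lambda\} \;\geq\; 2 - \lambda,
\]
required for every $k \in \{1,\ldots,N-1\}$ with $k\lambda \notin \ZZ$. Thus STC works precisely when $\min\{\{k\lambda\} : 1\leq k\leq N-1,\ k\lambda\notin\ZZ\} \geq 2 - \lambda$.

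The second step is to compute that minimum. Write $\lambda = M'/N'$ in lowest terms, so $\gcd(M',N')=1$, and let $P=\gcd(M,N)$ so $N = PN' \geq N'$. Because $\gcd(M',N')=1$, as $k$ runs over $1,\ldots,N'-1$ the residues $kM' \bmod N'$ are a permutation of $1,\ldots,N'-1$, and $k\lambda \in \ZZ$ precisely when $N'\mid k$. Consequently the set of nonzero fractional parts $\{k\lambda\}$ arising for $1\leq k\leq N-1$ is exactly $\{1/N', 2/N', \ldots, (N'-1)/N'\}$, and its minimum value is $1/N'$.

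The third step is arithmetic: the STC condition becomes
\[
\frac{1}{N'} \;\geq\; 2 - \frac{M'}{N'}, \qquad \text{i.e.} \qquad M' \;\geq\; 2N' - 1.
\]
Since we are in the range $1 \leq \lambda < 2$, we have $M' < 2N'$, so $M' \leq 2N'-1$, and the only possibility is $M' = 2N'-1$. Setting $L = N'$ yields $\lambda = (2L-1)/L$, as claimed; conversely any such $\lambda$ meets the inequality exactly. Combined with the known $\lambda \geq 2$ case, this establishes both implications of the theorem. The main conceptual obstacle is recognizing that the many individual inequalities indexed by $k$ collapse to a single threshold statement controlled by the smallest nonzero fractional part $1/N'$; once this is seen, the proof reduces to a one-line computation.
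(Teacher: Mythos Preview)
Your proof is correct. Both your argument and the paper's rest on Theorem \ref{thm:k-inequality}, but you take a different and somewhat cleaner route. The paper proceeds by observing that $\lfloor k\lambda\rfloor = 2k-1$ precisely when $\lambda\in[2-\tfrac1k,2)$, and then argues the two directions separately: it verifies \eqref{eqn:tetris} for each $k\le L-1$ when $\lambda=(2L-1)/L$, and for the converse it partitions $[1,2)$ into the intervals $[2-\tfrac1k,\,2-\tfrac1{k+1})$ and exhibits the failing index $k$ from the interval containing $\lambda$. Your approach instead rewrites \eqref{eqn:tetris} uniformly as the fractional-part condition $\{k\lambda\}\ge 2-\lambda$ and then invokes the elementary number-theoretic fact that, with $\lambda=M'/N'$ in lowest terms, the nonzero values of $\{k\lambda\}$ are exactly $\{1/N',\dots,(N'-1)/N'\}$; this collapses the whole family of inequalities to the single threshold $1/N'\ge 2-\lambda$, i.e.\ $M'\ge 2N'-1$, and handles both directions at once. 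The gain is a tidier, more symmetric argument; the paper's version is more hands-on but slightly more ad hoc. One very minor point: your second step tacitly assumes $N'\ge 2$ so that the set of nonzero fractional parts is nonempty; when $N'=1$ the condition of Theorem \ref{thm:k-inequality} is vacuous and $\lambda=1=(2\cdot 1-1)/1$, so the conclusion still holds.
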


\begin{proof}  The case $\lambda \geq 2$ is proved in \cite{CFMWZ09} so we assume $\lambda<2$.  

Let  $\lambda =  \frac{2L-1}{L}$ for some $L\in \mathbb{N}$.  The inequality \eqref{eqn:tetris} is satisfied for $k=1, 2, \ldots, L-1$, so by Theorem \ref{thm:k-inequality}, STC constructs a unit norm tight frame of $M=2L-1$ vectors for $\RR^L$.  If $N=LP$ and $M = P(2L-1)$ for some integer $P > 1$, STC also works producing a synthesis matrix composed of $L\times L$ blocks repeated $P$ times, as noted in the beginning of the proof of Theorem \ref{thm:k-inequality}.

Conversely, if $\lambda$ in reduced fractional form is $\frac{M}{N}$ with $1 \leq \lambda < 2-\frac{1}{N}$, then there is a unique value of $k\in\{1 ,\ldots,N-1\}$, for which  \[ 2-\frac{1}{k} \leq \lambda < 2-\frac{1}{k+1}.\]   The inequality \eqref{eqn:tetris} fails for this $k$, and thus by Theorem \ref{thm:k-inequality}, STC can not be performed.
\end{proof}

\begin{remark}  When $\lambda$ is written as $\frac{M}{N}$, where $M$ is the number of vectors and $N$ is the dimension of the space, it may not be in reduced form.  The condition for STC to work when $\lambda < 2$ is exactly that when the rational number $\lambda$ is reduced, it is of the form $\frac{2L-1}{L}$, for some $L\in\NN$.  If $M$ and $N$ are known to be relatively prime, this condition is equivalent to $M = 2N-1$.
\end{remark}

\begin{example}  Let the dimension $N=8$.  Then Theorem \ref{thm:characterization} clearly states that Spectral Tetris can be performed with $M \geq  15$ vectors to form unit norm tight frames.   But the theorem also states that when $M=12$ and $M=14$ we have $\lambda = \frac32$ and $\lambda = \frac74$ respectively, for which STC also works. 
\end{example}

\section{Special case: Tight frames with varied norms}

In this section, we use Theorem \ref{thm:norm} to classify the sequences of norms for which
PNSTC can construct a \textit{tight} frame of vectors having these norms.
We begin with a sufficient condition.  If the prescribed sequence of norms is $(a_m)_{m=1}^M$ and our frame vectors will reside in $\mathbb{R}^N$, we consider the case in which the tight frame bound $\lambda$ is greater than or equal to the sum of the squares of the largest two values in $(a_m)_{m=1}^M$.  We find that this condition is sufficient if the sequence $(a_m)_{m=1}^M$ is in decreasing order.

\begin{remark}  This condition is an analog to the condition of the tight frame bound being at least $2$ for the case of STC \cite{CFMWZ09}, which ensures that STC works to produce unit norm tight frames.    
\end{remark} 

\begin{theorem}\label{thm:sufficient}
Let $a_1\ge a_2 \ge \cdots \ge a_M>0$ and
\[ \lambda = \frac{1}{N}\sum_{i=1}^Ma_i^2.\]
If $a_1^2+a_2^2 \le \lambda$, then PNSTC constructs a $\lambda$-tight
frame $(f_m)_{m=1}^M$ for $\mathbb{R}^N$ satisfying $\|f_m\| = a_m$ for all $m=1,2,\ldots,M$.
\end{theorem}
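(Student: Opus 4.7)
The plan is to deduce the theorem from the easily-checked sufficient condition of the preceding Proposition via Theorem \ref{thm:norm}. Since that theorem permits the sequences $(a_m)_{m=1}^M$ and $(\lambda_n)_{n=1}^N$ to be permuted when checking Spectral Tetris readiness, I would reverse the prescribed nonincreasing norm sequence into an increasing one, setting $b_m := a_{M-m+1}$, while keeping the constant eigenvalue sequence $\lambda_n := \lambda$, which is trivially nondecreasing.

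The key step is to verify the pairwise inequality $b_{M-2\ell}^2 + b_{M-2\ell-1}^2 \leq \lambda$ required in the Proposition for $\ell = 0, 1, \ldots, N-1$. At $\ell = 0$ this is precisely the given hypothesis $a_1^2 + a_2^2 \leq \lambda$, since $b_M = a_1$ and $b_{M-1} = a_2$. For $\ell \geq 1$, both indices $M - 2\ell$ and $M - 2\ell - 1$ are at most $M-1$, so both $b_{M-2\ell}$ and $b_{M-2\ell-1}$ are dominated by $b_{M-1} \leq b_M$, giving $b_{M-2\ell}^2 + b_{M-2\ell-1}^2 \leq b_{M-1}^2 + b_M^2 \leq \lambda$. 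The equal-sum condition of the Proposition is built into the definition $\lambda = \tfrac{1}{N}\sum_{m=1}^M a_m^2$.

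A small preliminary check is to confirm $M \geq 2N$ so that the indices $M - 2\ell - 1$ are positive for all $\ell$ up to $N-1$. This follows from a pairing argument: since any two terms satisfy $a_i^2 + a_j^2 \leq a_1^2 + a_2^2 \leq \lambda$, grouping the $M$ squared norms into consecutive pairs gives $N\lambda = \sum_{m=1}^M a_m^2 \leq \lceil M/2 \rceil \lambda$; the borderline odd case $M = 2N-1$ can be ruled out by using $a_m^2 \leq a_2^2 \leq \lambda/2$ for $m \geq 2$, which would force a strict inequality. The Proposition then shows the permuted sequences are Spectral Tetris ready, and Theorem \ref{thm:norm} produces the desired $\lambda$-tight frame with $\|f_m\| = a_m$. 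The argument is essentially a direct reduction, so I do not anticipate any significant obstacle; the only care needed is in the bookkeeping of the reversal and the validity check $M \geq 2N$.
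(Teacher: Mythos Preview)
Your reduction to the preceding Proposition is valid, and the bookkeeping (the pairwise inequality for each $\ell$, and the verification $M\ge 2N$) is correct. This is, however, a genuinely different route from the paper's. The paper does \emph{not} reverse the sequence: it keeps $(a_m)_{m=1}^M$ in the given decreasing order and verifies Definition~\ref{STReady} directly, choosing $m_k$ so that $\sum_{m\le m_k}a_m^2\le k\lambda<\sum_{m\le m_k+1}a_m^2$, then using $a_{m_k+1}^2+a_{m_k+2}^2\le a_1^2+a_2^2\le\lambda$ to get $m_{k+1}\ge m_k+2$ and the second readiness condition.

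What each approach buys: the paper's direct argument literally proves the stated conclusion, namely that PNSTC run on the \emph{decreasing} input $(a_1,\dots,a_M)$ outputs $(f_m)$ with $\|f_m\|=a_m$ in that order. Your argument shows that PNSTC run on the \emph{reversed} (increasing) input succeeds, so the output satisfies $\|f_m\|=b_m=a_{M-m+1}$. You therefore obtain a $\lambda$-tight frame with the correct multiset of norms, and a relabeling recovers the statement; but if one reads Theorem~\ref{thm:sufficient} as asserting that PNSTC succeeds on the decreasing ordering itself, your proof needs one extra sentence acknowledging the relabeling. On the other hand, your route is pleasantly modular: it reuses the Proposition instead of re-deriving readiness from scratch, and the $M\ge 2N$ check you include is a nice byproduct that the paper's proof leaves implicit.
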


\begin{proof}   We need to show that $(a_m)_{m=1}^M$ (in decreasing order) and the sequence of $N$ copies of $\lambda$ are Spectral Tetris ready.
For each $k=1, 2 \ldots N-1$, let $m_k$ be the index for which
\[
\sum_{m=1}^{m_k} a_m^2\leq k\lambda<\sum_{m=1}^{m_k+1}a_m^2.
\]
First, the hypothesis gives for any $m$, $a_m^2 + a_{m+1}^2 \leq a_1^2 + a_2^2 \leq \lambda$.  Therefore, we have \begin{equation}\label{eqn:mkplus2} \sum_{m=1}^{m_k+2} a_m^2 = \sum_{m=1}^{m_k} a_m^2 + (a_{m_k+1}^2 + a_{m_k+2}^2) \leq k\lambda + \lambda = (k+1)\lambda.\end{equation} This proves that $m_{k+1} \geq m_k+2$.  

Equation \eqref{eqn:mkplus2} gives \begin{eqnarray*} a_{m_k+2}^2 &\geq& (k+1)\lambda - a_{m_k+1}^2 - \sum_{m=1}^{m_k}a_m^2 \\ &=& k\lambda - \sum_{m-1}^{m_k}a_m^2 + (\lambda - a_{m_k+1}^2)\\ &\geq& k\lambda - \sum_{m-1}^{m_k}a_m^2, \end{eqnarray*} since $\lambda \geq a_{m_k+1}^2$.  Therefore, we have satisfied Definition \ref{STReady}.
\end{proof}

\begin{example}\label{exa:permutenorms}
As in some previous examples, we may need to permute $(a_m)_{m=1}^M$ to be able to use PNSTC. Suppose we want to construct a tight frame
in $\mathbb{R}^3$ with norms $\sqrt{7},\sqrt{7},\sqrt{6},1,1$. Then $\lambda=22/3$ and we find Spectral Tetris will not work for the ordering $(\sqrt{7},\sqrt{7},\sqrt{6},1,1)$. However the permutation
$(\sqrt{7},\sqrt{6},1,1,\sqrt{7})$ is Spectral Tetris ready, so PNSTC will work.  Note that this example also demonstrates that Theorem \ref{thm:sufficient} does not provide a necessary condition for PNSTC to work.
\end{example}

We can give a necessary and sufficient condition for a sequence of norms to yield a tight frame with PNSTC by reformulating 
Theorem \ref{thm:norm}  and Definition \ref{STReady} to the case of tight frames of vectors with non-uniform norms: 
A tight frame for $\mathbb{R}^N$ with prescribed norms $(a_m)_{m=1}^M$ having all eigenvalues equal to $\lambda=\frac{1}{N}\sum_{m=1}^{M}a_m^2$ can be constructed via PNSTC if and only if there exists an ordering of $(a_m^2)_{m=1}^M$ for which there is
a partition $0\leq m_1<\cdots<m_N=M$ of $\{0, 1, \ldots M\}$ such that for all $k = 1, 2, \ldots, N-1$:
\begin{enumerate}[(i)] 
\item  $\sum_{m=1}^{m_k}a_m^2\leq k\lambda <\sum_{m=1}^{m_k+1}a_m^2$ for all $k=1,\ldots,N-1$, and
\item if  $\sum_{m=1}^{m_k}a_m^2< k\lambda$, then $m_{k+1}-m_k\geq 2$ and 
\begin{align}\label{222}
a_{m_k+2}^2\geq k\lambda -\sum_{m=1}^{m_k}a_m^2.
\end{align}
\end{enumerate}

\section{Special Case: Frames of unit vectors}
Another special case of PNSTC is the case of unit norm but not necesarily tight frames.
Such frames are known to exist, provided  the eigenvalues of the frame sum up to the number of frame vectors  \cite{DFKLOW04}. A sufficient condition for STC to work is that, in addition, the spectrum is contained in $[2,\infty)$, as proved in \cite{CCHKP10}.    The formulation of Definition \ref{STReady} and Theorem \ref{thm:norm} for unit vectors having a prescribed sequence of eigenvalues allows for eigenvalues less than $2$.

\begin{corollary}\label{cor:unitframe}  Let $\sum_{n=1}^N \lambda_n= M$ where $M \in \mathbb{N}$ and $M \geq N$.  Then STC can be used to produce a unit norm frame for $\mathbb{R}^N$ with eigenvalues $(\lambda_n)_{n=1}^N$ if and only if there is some permutation of $(\lambda_n)_{n=1}^N$ such that there exists a partition $0\leq m_1<\cdots<m_N=M$ of $\{0,\ldots,M\}$, such that for each $k=1,\ldots,N-1$ we have 
\begin{enumerate}[(i)]
\item $m_k \leq \sum_{n=1}^k \lambda_n < m_k +1$ and 
\item if $m_k <\sum_{n=1}^k \lambda_n$, then $m_{k+1} - m_k \geq 2$.  \end{enumerate} 
\end{corollary}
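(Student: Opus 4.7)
The plan is to derive Corollary \ref{cor:unitframe} as a direct specialization of Theorem \ref{thm:norm} to the setting where all prescribed norms equal $1$. Since $a_m = 1$ for $m = 1,\ldots,M$, the hypothesis $\sum_{n=1}^N \lambda_n = M$ is precisely the required trace condition $\sum_m a_m^2 = \sum_n \lambda_n$, so PNSTC (which here coincides with STC because every $2\times 2$ building block has unit-norm columns) may be applied provided the sequences $(a_m)_{m=1}^M = (1,\ldots,1)$ and $(\lambda_n)_{n=1}^N$ admit an ordering that is Spectral Tetris ready in the sense of Definition \ref{STReady}. The task therefore reduces to rewriting Definition \ref{STReady} under the substitution $a_m^2 \equiv 1$ and checking that the resulting conditions agree exactly with (i) and (ii) of the corollary.

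Condition (i) of Definition \ref{STReady} reads $\sum_{m=1}^{m_k} a_m^2 \le \sum_{n=1}^k \lambda_n < \sum_{m=1}^{m_k+1} a_m^2$; with $a_m = 1$ this becomes $m_k \le \sum_{n=1}^k \lambda_n < m_k + 1$, matching condition (i) of the corollary verbatim. For condition (ii) of Definition \ref{STReady}, the hypothesis $\sum_{m=1}^{m_k} a_m^2 < \sum_{n=1}^k \lambda_n$ becomes $m_k < \sum_{n=1}^k \lambda_n$, and the conclusion has two parts: $m_{k+1} - m_k \ge 2$, which is condition (ii) of the corollary, and
\[
a_{m_k+2}^2 \ge \sum_{n=1}^k \lambda_n - \sum_{m=1}^{m_k} a_m^2.
\]
Under $a_m = 1$ this second inequality is $1 \ge \sum_{n=1}^k \lambda_n - m_k$, which follows automatically from the right-hand inequality in condition (i), namely $\sum_{n=1}^k \lambda_n < m_k + 1$. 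Hence in the unit-norm case the square-sum condition in Definition \ref{STReady}(ii) is redundant and only the gap condition $m_{k+1} - m_k \ge 2$ survives.

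The non-trivial content is therefore entirely packaged in the already-proved Theorem \ref{thm:norm}; what remains is only the bookkeeping above. The equivalence is established in both directions simultaneously because the translation is an if-and-only-if. The mildest subtlety worth mentioning is ensuring that the permutation freedom asserted in Theorem \ref{thm:norm} applies only to $(\lambda_n)_{n=1}^N$ here, since any permutation of the unit sequence $(1,\ldots,1)$ leaves it unchanged; this is the reason the corollary's statement mentions only a permutation of the eigenvalues. No step of the argument presents a real obstacle: the only point demanding care is verifying that the inequality $a_{m_k+2}^2 \ge \sum_{n=1}^k \lambda_n - \sum_{m=1}^{m_k} a_m^2$ from Definition \ref{STReady}(ii) becomes vacuous in the unit-norm setting, as shown above.
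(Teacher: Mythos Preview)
Your proposal is correct and matches the paper's approach: the paper presents this corollary without proof, treating it as the immediate specialization of Theorem~\ref{thm:norm} and Definition~\ref{STReady} to the case $a_m \equiv 1$. Your observation that the second inequality in Definition~\ref{STReady}(ii) becomes automatic from condition~(i), together with the remark that permuting the constant sequence of norms is trivial, is precisely the bookkeeping the paper leaves implicit.
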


This characterization provides a strict limitation on the location of eigenvalues that can be strictly less than $1$.

\begin{corollary}  If STC can be used to produce a unit norm frame for $\mathbb{R}^N$ with eigenvalues $(\lambda_n)_{n=1}^N$,
then $\lambda_k<1$ is only possible if $k=1$ or if
\begin{align*}
m_{k-1} = \sum_{n=1}^{k-1} \lambda_n.
\end{align*}
\end{corollary}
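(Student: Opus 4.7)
The plan is to prove the contrapositive by combining the two defining conditions of the partition $\{m_k\}_{k=1}^N$ from Corollary \ref{cor:unitframe}. Assume that STC produces the unit norm frame with eigenvalues $(\lambda_n)_{n=1}^N$, so that some permutation of the eigenvalues yields a partition satisfying conditions (i) and (ii) of that corollary. Fix $k \geq 2$ and suppose, aiming at a contradiction, that $\lambda_k < 1$ while $m_{k-1} < \sum_{n=1}^{k-1} \lambda_n$.

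The key step is then to chain the two conditions together. By (ii) applied at index $k-1$, the strict inequality $m_{k-1} < \sum_{n=1}^{k-1} \lambda_n$ forces $m_k - m_{k-1} \geq 2$, hence $m_k \geq m_{k-1} + 2$. On the other hand, (i) at index $k-1$ says $\sum_{n=1}^{k-1} \lambda_n < m_{k-1} + 1$, and since we are assuming $\lambda_k < 1$ we can add these to obtain
\[
\sum_{n=1}^{k} \lambda_n = \sum_{n=1}^{k-1} \lambda_n + \lambda_k < m_{k-1} + 1 + 1 = m_{k-1} + 2 \leq m_k.
\]
This directly contradicts the lower bound $m_k \leq \sum_{n=1}^k \lambda_n$ from condition (i) at index $k$, completing the proof.

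I do not foresee a genuine obstacle here; the statement really is a one-line consequence of unpacking the two partition conditions. The only thing worth flagging is the exclusion of $k=1$: the argument needs a predecessor index $k-1 \geq 1$ at which to invoke (i) and (ii), so the case $k=1$ is intrinsically unreachable by this method, which is exactly why it appears as an exception in the statement.
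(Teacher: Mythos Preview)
Your proof is correct and follows essentially the same approach as the paper's own proof: both assume $k\ge 2$ with $m_{k-1}<\sum_{n=1}^{k-1}\lambda_n$, invoke condition (ii) at index $k-1$ to get $m_k\ge m_{k-1}+2$, and combine this with condition (i) to force $\lambda_k\ge 2-\bigl(\sum_{n=1}^{k-1}\lambda_n-m_{k-1}\bigr)>1$. The paper phrases this last step in terms of the second-row square sum of the inserted $2\times 2$ block, whereas you argue directly with the partition inequalities, but the underlying arithmetic is identical.
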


\begin{proof}
It is clear that STC can begin if $\lambda_1 <1$ by letting $m_1 = 0$ and beginning the algorithm with a $2 \times 2$ block. For each $k= 2, 3, \ldots N$, either $m_{k-1} < \sum_{n=1}^{k-1} \lambda_n$ or $m_{k-1} = \sum_{n=1}^{k-1} \lambda_n$.  In the case of the inequality, STC requires a $2 \times 2$ block with unit norm columns whose first row square sums to 
$x= \sum_{n=1}^{k-1} \lambda_n - m_{k-1}>0$  and whose second row square sums to $2-x$.  

Note that $x < 1$. Moreover, $2-x \leq \lambda_k$ by Corollary \ref{cor:unitframe}, Property (ii), hence $\lambda_{k} > 1$.  Therefore, the only eigenvalues that can be less than $1$ are $\lambda_1$ and $\lambda_{k}$ in the case where $m_{k-1} = \sum_{n=1}^{k-1} \lambda_n$. 
\end{proof}

We know from Corollary \ref{cor:unitframe} that given $(\lambda_n)_{n=1}^N$ with
$\sum_{n=1}^N\lambda_n= M > N$, STC may not be able to construct a unit norm frame with eigenvalues
$(\lambda_n)_{n=1}^N$.  
Now we will see that there is some equal-norm (but not necessarily unit-norm) frame with eigenvalues $(\lambda_n)_{n=1}^N$ which is constructible by PNSTC.

\begin{theorem}
Let $(\lambda_n)_{n=1}^N\subseteq(0,\infty)$ be decreasing. Then PNSTC can construct
an equal-norm frame for $\mathbb{R}^N$ with eigenvalues $(\lambda_n)_{n=1}^N$.
\end{theorem}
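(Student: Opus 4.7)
The plan is to reduce to Theorem \ref{thm:norm} by exhibiting a common norm $a$ and a frame cardinality $M$ for which the constant sequence $(a,\ldots,a)$ of length $M$, together with $(\lambda_n)_{n=1}^N$, is Spectral Tetris ready. Since the cardinality is unconstrained, the trace equality $Ma^2 = S := \sum_{n=1}^N\lambda_n$ leaves $M$ free, and once $M$ is fixed the norm is determined by $a^2 = S/M$.

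I would pick any positive integer $M$ with $M\ge 2S/\lambda_N$ and set $a^2 := S/M$; since $(\lambda_n)$ is decreasing with $\lambda_N>0$, this forces $a^2 \le \lambda_N/2 \le \lambda_n/2$ for every $n$. Writing $S_k := \sum_{n=1}^k\lambda_n$, I would then propose the partition $m_k := \lfloor S_k/a^2\rfloor$ for $k=1,\ldots,N-1$ and $m_N := M$. Condition (i) of Definition \ref{STReady} then holds for $k<N$ by the definition of the floor and holds at $k=N$ via $Ma^2 = S$.

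The substance of the argument lies in condition (ii), which is where the smallness of $a^2$ is exploited. The inequality $a^2 \ge S_k - m_k a^2$ is just the right half of (i). For the spacing $m_{k+1} - m_k \ge 2$, the upper bound $m_k \le S_k/a^2$ (from the left half of (i) at $k$) and the lower bound $m_{k+1} > S_{k+1}/a^2 - 1$ (from the right half of (i) at $k+1$) together yield
\[
m_{k+1} - m_k > \frac{\lambda_{k+1}}{a^2} - 1 \ge \frac{\lambda_N}{a^2} - 1 \ge 1,
\]
so as an integer $m_{k+1} - m_k \ge 2$. In the edge case $S_k/a^2 \in \ZZ$, writing $S_{k+1}/a^2 = m_k + \lambda_{k+1}/a^2$ gives $m_{k+1} - m_k \ge \lfloor \lambda_{k+1}/a^2 \rfloor \ge 2$ as well, so the partition is always strictly increasing with jumps of at least two. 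With both clauses of Definition \ref{STReady} verified, Theorem \ref{thm:norm} delivers the equal-norm frame.

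There is really no substantive obstacle here: the only ingredient is a uniform lower bound on $\lambda_{k+1}$ for $k<N$, which the decreasing hypothesis supplies via $\lambda_N > 0$. The freedom to inflate $M$ (and hence to shrink $a^2$) does the rest, and the resulting frame has common norm $a = \sqrt{S/M}$, which can be tuned by choosing $M$ above the threshold $2S/\lambda_N$.
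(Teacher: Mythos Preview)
Your proof is correct. The core idea matches the paper's: take $M$ large enough that the common squared norm $a^2=S/M$ is at most $\lambda_N/2$, so that every step of the algorithm has room. The executions differ, however. The paper rescales the eigenvalues to $\tilde\lambda_n=(r^2/S)\lambda_n$ so that they sum to the integer $r^2$ and satisfy $\tilde\lambda_N\ge2$, then invokes an external result \cite[Corollary 4.9]{CCHKP10} to obtain a \emph{unit} norm frame with those eigenvalues, and finally scales back. You instead stay inside the paper's own framework: you fix $a^2=S/M$, set $m_k=\lfloor S_k/a^2\rfloor$, and verify Definition~\ref{STReady} directly, appealing only to Theorem~\ref{thm:norm}. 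Your route is more self-contained and avoids the extra hypothesis $r^2\epsilon\ge3$ that the cited corollary requires; the paper's route has the minor advantage of producing a unit norm frame as an intermediate object. Both arguments really only use $\lambda_n\ge\lambda_N>0$, so the decreasing hypothesis is inessential in either case, as you note.
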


\begin{proof}
Let $\lambda = \sum_{n=1}^N \lambda_n$,
and note that 
\[ \frac{\lambda_1}{\lambda} = 1-\epsilon,\]
for some $\epsilon>0$.  
Now
choose $r\in\NN$ sufficiently large such that 
\[ \frac{r^2}{\lambda}\lambda_N \ge2 \qquad \textrm{and} \qquad r^2\epsilon \geq3.\]
Define
\[ \tilde{\lambda}_n= \frac{r^2}{\lambda} \lambda_n,\mbox{ for all } n=1,2,\ldots,N.\] 
Then $(\tilde{\lambda}_n)_{n=1}^N$ is monotone decreasing, $\sum_{n=1}^N\tilde{\lambda}_n= r^2$,
and $\tilde{\lambda}_N \ge 2$. Further, we have
\[ \left\lfloor \frac{r^2}{\lambda}\lambda_1\right\rfloor \le \frac{r^2}{\lambda}\lambda_1 = 
r^2(1-\epsilon) = r^2-r^2\epsilon
\le r^2-3.\]
By \cite[Corollary 4.9]{CCHKP10}, there exists a unit norm frame $(f_m)_{m=1}^{r^2}$ for $\mathbb{R}^N$
with $r^2$ vectors and having eigenvalues $(\tilde{\lambda}_n)_{n=1}^N$.
Hence, $(\frac{\sqrt{\lambda}}{r}f_m)_{m=1}^{r^2}$ is an equal-norm frame for $\mathbb{R}^N$
with $r^2$ vectors and eigenvalues $(\lambda_n)_{n=1}^N$.
\end{proof}

\section{Acknowledgements}

The authors wish to thank Chandra Vaidyanathan for helpful conversations and Janet Tremain for generously providing examples. The third author is grateful to the University of Missouri Department of Mathematics for their hospitality while working on this project.


\end{document}